\def \To{\longrightarrow}
\def \Vec{\operatorname{Vec}}
\def \Z{\mathbbm{Z}}
\def \R{\mathcal{R}}
\def \r{\mathbbm{R}}
\def \k{\mathbbm{k}}
\def \1{\mathbf{1}}
\def \Id{\operatorname{Id}}
\numberwithin{equation}{section}
\newtheorem{Theorem}{Theorem}[section]
\newtheorem{Corollary}[Theorem]{Corollary}
\newtheorem{Proposition}[Theorem]{Proposition}
 { \theoremstyle{definition}
\newtheorem{Definition}[Theorem]{Definition} }
\begin{document}


\newcommand{\arXivNumber}{1510.04408}

\renewcommand{\PaperNumber}{004}

\FirstPageHeading

\ShortArticleName{Generalized Clif\/ford Algebras as Algebras in Suitable Symmetric Linear Gr-Categories}

\ArticleName{Generalized Clif\/ford Algebras as Algebras\\ in Suitable Symmetric Linear Gr-Categories}

\Author{Tao {CHENG}~$^{\dag\ddag}$, Hua-Lin {HUANG}~$^{\dag}$ and Yuping {YANG}~$^{\dag}$}

\AuthorNameForHeading{T.~Cheng, H.-L.~Huang and Y.~Yang}

\Address{$^{\dag}$~School of Mathematics, Shandong University, Jinan 250100, China}
\EmailD{\href{mailto:taocheng@sdnu.edu.cn}{taocheng@sdnu.edu.cn}, \href{mailto:hualin@sdu.edu.cn}{hualin@sdu.edu.cn},
\href{mailto:yupingyang@mail.sdu.edn.cn}{yupingyang@mail.sdu.edn.cn}}

\Address{$^{\ddag}$~School of Mathematical Science, Shandong Normal University, Jinan 250014, China}

\ArticleDates{Received October 22, 2015, in f\/inal form January 06, 2016; Published online January 12, 2016}

\Abstract{By viewing Clif\/ford algebras as algebras in some suitable symmetric Gr-cate\-go\-ries, Albuquerque and Majid were able to give a new derivation of some well known results about Clif\/ford algebras and to generalize them. Along the same line, Bulacu observed that Clif\/ford algebras are weak Hopf algebras in the aforementioned categories and obtained other interesting properties. The aim of this paper is to study generalized Clif\/ford algebras in a~similar manner and extend the results of Albuquerque, Majid and Bulacu to the generalized setting. In particular, by taking full advantage of the gauge transformations in symmetric linear Gr-categories, we derive the decomposition theorem and provide categorical weak Hopf structures for generalized Clif\/ford algebras in a~conceptual and simpler manner.}

\Keywords{generalized Clif\/ford algebra; symmetric Gr-category; twisted group algebra}

\Classification{15A66; 18D10; 16S35}

\section{Introduction}

Clif\/ford algebras and their generalizations have played important roles in various branches of science and engineering. Though they have been studied in depth for many years, some new ideas still get involved and novel properties and applications are found. In recent years, with the pioneering work of Albuquerque and Majid \cite{am1, am2}, Clif\/ford algebras can be seen as algebras in some suitable symmetric Gr-categories. Many well known results about Clif\/ford algebras thus can be derived by some novel ideas arising from the theory of tensor categories and this viewpoint also helps to make natural generalization. Along the same line, Bulacu observed that Clif\/ford algebras are weak Hopf algebras in the aforementioned categories and obtained other interesting properties, see \cite{b1}.

The aim of this paper is to study generalized Clif\/ford algebras, written simply GCAs in the following,  in a similar manner and extend the results of Albuquerque, Majid~\cite{am1, am2} and Bula\-cu~\cite{b, b1} to the generalized setting. We provide an explicit formula of 2-cocycles to present GCAs as twisted group algebras. This helps us to put them inside suitable symmetric Gr-categories and apply the working philosophy of Albuquerque--Majid. In particular, by taking full advantage of the gauge transformations in symmetric linear Gr-categories, we derive the decomposition theorem and provide completely the categorical weak Hopf structures for GCAs in a conceptual and simpler manner.

The paper is organized as follows. In Section~\ref{section2}, we recall the notion of symmetric linear Gr-categories and some examples of algebras inside them. The presentation of GCAs as explicit twisted group algebras is given in Section~\ref{section3}. The well known decomposition theorem of GCAs are also derived conceptually and in a very simple manner with a help of suitable gauge transformations. Section~\ref{section4} is devoted to the categorical weak Hopf structures of GCAs. The complete determination of the weak Hopf structures of GCAs in some suitable symmetric Gr-categories is translated to a much simpler situation via gauge transformations. This simplif\/ies, improves and generalizes the main results of~\cite{b, b1}.

Throughout the paper, let $\k$ be a f\/ield and let $\k^*$ denote the multiplicative group $\k {\setminus} \{0\}$. Fix a positive integer $n \ge 2$. Always assume that there exists in~$\k$ a primitive $n$-th root of unity $\omega$ and $\operatorname{char} \k \nmid n$. By $\Z_n= \{\bar{0},\bar{1},\dots,\overline{n-1}\}$ we denote the cyclic group of order $n$ written additively.

\section{Symmetric linear Gr-categories and algebras therein}\label{section2}

The aim of this section is to collect some preliminary concepts and notations about symmetric linear Gr-categories and various algebraic structures within them.

\subsection{Symmetric linear Gr-categories}\label{section2.1}

Let $G$ be a group with unit $e$ and $\k$ a f\/ield. By $\Vec_G$ we denote the category of f\/inite-dimensional $\k$-spaces graded by the group $G$. Adorned with the usual tensor product of $G$-graded vector spaces and an associativity constraint given by a normalized 3-cocycle $\Phi$ on $G$, i.e., a function $\Phi \colon G \times G \times G \rightarrow \k^*$ such that for all $x,y,z,t \in G$
\begin{gather*}
\Phi(y,z,t)\Phi(x,yz,t)\Phi(x,y,z)=\Phi(x,y,zt)\Phi(xy,z,t) \qquad \mathrm{and} \qquad \Phi(x,e,y)=1,
\end{gather*}
then $\Vec_G$ becomes a tensor category which is called a $\k$-linear Gr-category over $G$ and is denoted by $\Vec_G^\Phi$ in the rest of the paper. When $\Phi$ vanishes, i.e., $\Phi(x,y,z) \equiv 1$, we write simply~$\Vec_G$. A~linear Gr-category $\Vec_G^\Phi$ is said to be braided, if there exists a braiding given by a~quasi-bicharacter $\R$ with respect to~$\Phi$, that is a function $\R\colon G \times G \rightarrow \k^*$ satisfying
\begin{gather*}
\R(xy, z)=\R(x , z)\R(y ,z)\frac{\Phi(z,x,y)\Phi(x,y,z)}{\Phi(x,z,y)},\\
\R(x , yz)=\R(x , y)\R(x , z)\frac{\Phi(y,x,z)}{\Phi(y,z,x)\Phi(x,y,z)}
\end{gather*}
 for all $x,y,z \in G$. Note that a necessary condition for $\Vec_G^\Phi$ being braided is that $G$ is abelian. A~sym\-metric Gr-category is a braided Gr-category $(\Vec_G^\Phi, \R)$ in which the braiding $\mathcal{R}$ is symmetric, in other words, $\mathcal{R}(x,y)\mathcal{R}(y,x)=1$ for any $x,y\in G$. Note that, symmetric braidings of~$\Vec_G$ are nothing other than the familiar symmetric bicharachers of~$G$.

For more details on symmetric linear Gr-categories, the reader is referred to \cite{hly1,hly2}.

\subsection{Algebras and coalgebras in Gr-categories}\label{section2.2}
An algebra in the linear Gr-category $\Vec_G^\Phi$ is an object $A$ with a multiplication morphism $m\colon A \otimes A \to A$ and a unit morphism $u\colon \k \to A$ satisfying the associativity and the unitary conditions of usual associative algebras but expressed in diagrams of the category $\Vec_G^\Phi$. More precisely, an algebra $A$ in $\Vec_G^\Phi$ is a f\/inite-dimensional $G$-graded space $\oplus_{g \in G} A_g$ with a~multiplication~$\cdot$ such that \begin{gather*} A_g \cdot A_h
\subseteq A_{gh}, \qquad (a \cdot b) \cdot c = \Phi(|a|,|b|,|c|) a
\cdot (b \cdot c) \end{gather*} for all homogeneous elements $a,b,c
\in A$. Here and below, $|a|$ denotes the $G$-degree of~$a$. There is also a unit
element~$1$ in $A$ such that
\begin{gather*}
 1 \cdot a=a=a \cdot 1
\end{gather*}
 for all $a \in A$. Further, if the Gr-category $\Vec_G^\Phi$ is braided with braiding $\R$, then we say that two homogeneous elements $a,b \in A$ are commutative if
 \begin{gather*}
 a\cdot b=\R(|a|,|b|)b\cdot a.
 \end{gather*}
 If any two homogeneous elements commute, then we say that~$A$ is a commutative algebra in $\big(\Vec_G^\Phi,\R\big)$.

Dually, a coalgebra in the linear Gr-category $\Vec_G^\Phi$ is an object $C$
with a comultiplication morphism $\Delta\colon C \to C \otimes C$ and a counit
morphism $\varepsilon\colon C \to \k$ satisfying the coassociativity and counitary
axioms in the category $\Vec_G^\Phi$.

We remark that algebras in $\Vec_G^\Phi$ are also called $G$-graded quasialgebras in~\cite{am1}. In addition, morphisms for algebras and coalgebras in $\Vec_G^\Phi$ can be def\/ined in an obvious manner and thus we omit further details.

\subsection{Twisted group algebras}\label{section2.3}

Algebras in $\Vec_G^\Phi$ may be viewed as a natural generalization of the familiar twisted group algebras. Let $G$ be a group and $F$ a normalized 2-cochain on $G$, i.e., a function $F \colon G\times G \rightarrow \k^*$ with $F(e,x)=F(x,e)=1$. Then we can def\/ine a twisted group algebra $\k_F[G]$ which has the same vector space as the group algebra $\k [G]$ but a dif\/ferent product twisted by $F$, namely
\begin{gather*}
 g \star h=F(g,h)gh, \qquad \forall\, g,h\in G.
 \end{gather*}
Clearly, $\k_F[G]$ is an algebra in the linear Gr-category $\Vec_G^{\partial F}$, where
\begin{gather*}
\partial F (x,y,z)=\frac{F(x,y)F(xy,z)}{F(x,yz)F(y,z)}
\end{gather*}
is the dif\/ferential of~$F$. Note that, the twisted group algebra $\k_F[G]$ is associative in the usual sense if and only if~$F$ is a 2-cocycle, i.e., $\partial F=0$, on~$G$. Furthermore, if~$G$ is abelian then $\Vec_G^{\partial F}$ is symmetric with braiding given by
\begin{gather*}\R_F(g,h)=\frac{F(g,h)}{F(h,g)}, \qquad \forall\, g,h \in G,
\end{gather*} and $\k_F[G]$ is commutative in the symmetric Gr-category $\big(\Vec_G^{\partial F}, \R_F\big)$.

Many interesting algebras appear as twisted group algebras. Here we recall some examples presented in~\cite{am1,am2,mgo}. Let $\r$ denote the f\/ield of real numbers, $\Z_2=\{0,1\}$ the cyclic group of order 2, and $\Z_2^n$ the direct product of $n$ copies of $\Z_2$. Elements of $\Z_2^n$ are written as $n$-tuples of $\{0,1\}$ and the group product is written as $+$. Def\/ine functions $f_m\colon \Z_2^n \times \Z_2^n \to \Z_2$ for all $1\le m \le 3$ by
\begin{gather*} f_1(x,y)=\sum_{i} x_iy_i,\qquad f_2(x,y)=\sum_{i<j} x_iy_j, \qquad
  f_3(x,y)=\sum\limits_{\substack{{\rm distinct}\ i,\,j,\,k \\ i<j}} x_ix_jy_k. \end{gather*}
\begin{enumerate}\itemsep=0pt
  \item   Let $F_{\rm Cl}\colon \Z_2^n \times \Z_2^n \to \r^*$ be a function def\/ined by
\begin{gather*} F_{\rm Cl}(x,y)=(-1)^{f_1(x,y)+f_2(x,y)}. \end{gather*}
Then the associated twisted group algebra~$\r_{F_{\rm Cl}} [\Z_2^n]$ is
the well-known real Clif\/ford algebra~${\rm Cl}_{0,n}$, see~\cite{am2} for detail.
This recovers the algebra of complex numbers $\mathbb{C}$ when $n=1$ and the algebra of quaternions $\mathbb{H}$ when $n=2$.
Note that ${\rm Cl}_{0,n}$ is associative in the usual sense since the function $F_{\rm Cl}$ is a 2-cocycle.
  \item Assume $n \ge 3$. Def\/ine the function $F_\mathbb{O}\colon \Z_2^n \times \Z_2^n \to \r^*$ by
\begin{gather*}
F_\mathbb{O}(x,y)=(-1)^{f_1(x,y)+f_2(x,y)+f_3(x,y)}.
\end{gather*}
Then the twisted group algebra $\r_{F_\mathbb{O}} [\Z_2^n]$ is the algebra of higher octonions~$\mathbb{O}_n$
introduced in \cite{mgo} by generalizing the realization of octonions via twisted group algebras
(i.e., the case of $n=3$) observed in~\cite{am1}.
Clearly, the series of algebras~$\mathbb{O}_n$ are nonassociative in the usual sense as the function~$F_\mathbb{O}$ is not a 2-cocycle.
\end{enumerate}

\subsection{Braided tensor products}\label{section2.4}
If $A_1$ and $A_2$ are two algebras in a symmetric linear Gr-category $\big(\Vec_G^\Phi, \R\big)$, then there is a~natural braided tensor product algebra $A_1 \widehat{\otimes} A_2$ living in same category with product given by
\begin{gather*}
\big(a \widehat{\otimes} a'\big) \big(b \widehat{\otimes} b'\big) = \frac{\Phi(|a|,|a'|,|b|)\Phi(|a||b|,|a'|,|b'|)}{\Phi(|a||a'|,|b|,|b'|)\Phi(|a|,|b|,|a'|)}\R(|a'|,|b|)a\cdot b\widehat{\otimes} a'\cdot b',
 \end{gather*}
 for all homogeneous elements $a,b\in A_1$ and $a',b'\in A_2$. Moreover, if $A_1$, $A_2$, $A_3$ are algebras in $\big(\Vec_G^\Phi, \R\big)$, then
 $\big(A_1 \widehat{\otimes} A_2\big)  \widehat{\otimes} A_3$ is naturally isomorphic to $A_1 \widehat{\otimes} \big(A_2 \widehat{\otimes} A_3\big)$. Thus in the following, for brevity we will omit the parentheses for multiple braided tensor product of algebras in symmetric tensor categories.

Dually, if $C_1$ and $C_2$ are two coalgebras in $\big(\Vec_G^\Phi, \R\big)$, then there is a braided tensor product coalgebra $C_1 \widehat{\otimes} C_2$ in the same category with coproduct def\/ined by
\begin{gather*}
\Delta\big(a\widehat{\otimes} b\big) = \frac{\Phi(|a_1|,|a_2|,|b_1|)\Phi(|a_1||b_1|,|a_2|,|b_2|)}{\Phi(|a_1||a_2|,|b_1|,|b_2|)\Phi(|a_1|,|b_1|,|a_2|)}\R(|a_2|,|b_1|)
(a_1\widehat{\otimes} b_1) \otimes (a_2\widehat{\otimes} b_2),
 \end{gather*}
 for all homogeneous $a\in C_1$, $b\in C_2$. Here and in the following we use Sweedler's sigma notation $\Delta(a)=a_1 \otimes a_2$ for coproduct.

\subsection{Weak Hopf algebras in symmetric Gr-categories}\label{section2.5}

With braided tensor product, we can def\/ine Hopf algebras and their various generalizations in symmetric Gr-categories. Of course, one may even def\/ine these algebraic structures in general braided tensor categories, see~\cite{majid1, majid2}.
For simplicity and for our purpose, we only recall the notion of weak Hopf algebras in symmetric Gr-categories $\big(\Vec_G^\Phi, \R\big)$.

\begin{Definition}\label{definition2.1}
Call $(H,m,\mu,\Delta,\varepsilon, S)$ a weak Hopf algebra in $\big(\Vec_G^\Phi, \R\big)$, if
\begin{enumerate}\itemsep=0pt
  \item[(1)] $(H,m,\mu)$ is an algebra in $\big(\Vec_G^\Phi, \R\big)$;
  \item[(2)] $(H,\Delta,\varepsilon)$ is a coalgebra in $\big(\Vec_G^\Phi, \R\big)$;
  \item[(3)] $\Delta\colon H\To H \widehat{\otimes} H$ is multiplicative, i.e., $\Delta(xy)=\Delta(x)\Delta(y)$, $\forall\, x, y \in H$;
  \item[(4)] the following identities hold:
  \begin{gather}
  \varepsilon((fg)h)=\Phi(|fg_1|,|g_2|,|h|)\Phi^{-1}(|f|,|g_1|,|g_2|)\varepsilon(fg_1)\varepsilon(g_2h)\nonumber\\
  \hphantom{\varepsilon((fg)h)}{}
  =\Phi(|fg_2|,|g_1|,|h|)\Phi^{-1}(|f|,|g_2|,|g_1|)\R(|g_1|,|g_2|)\varepsilon(fg_2)\varepsilon(g_1h) \nonumber\\
  \hphantom{\varepsilon((fg)h)=}{}
   \ \forall \ \mathrm{homogeneous \ elements} \ f,g,h \in H,\label{def-eq1}\\
  (1_{11}\otimes 1_{12})\otimes1_{2}= \Phi(|1_1|,|1_2|,|1_{(1)}|)(1_1\otimes 1_{2}1_{(1)})\otimes1_{(2)}\nonumber\\
 \hphantom{(1_{11}\otimes 1_{12})\otimes1_{2}}{}
  = \Phi(|1_1|,|1_2|,|1_{(1)}|)\R(|1_2|,|1_{(1)}|)(1_{1}\otimes 1_{(1)}1_2)\otimes1_{(2)},\label{def-eq2}
  \end{gather}
  where we write $\Delta(1)=1_1 \otimes 1_2=1_{(1)} \otimes 1_{(2)}$;
 \item[(5)] $S \colon H \To H$ is a morphism satisfying
\begin{gather*}
 h_1S(h_2)=\Phi(|1_1|,|1_2|,|h|)\Phi^{-1}(|1_1|,|h|,|1_2|)\R(|1_2|,|h|)\varepsilon(1_1h)1_2,
\\
 S(h_1)h_2=\Phi^{-1}(|h|,|1_1|,|1_2|)\Phi(|1_1|,|h|,|1_2|)\R(|h|,|1_1|)1_1\varepsilon(h1_2),
 \\
 S(h)=(S(h_1)h_2)S(h_3)
\end{gather*}
for all homogeneous elements $h\in H$.
\end{enumerate}
\end{Definition}

Note that a weak Hopf algebra in $\big(\Vec_G^\Phi, \R\big)$ is a Hopf algebra in $\big(\Vec_G^\Phi, \R\big)$ if and only if the comultiplication is unit-preserving, and if and only if the counit is a morphism of algebras in $\big(\Vec_G^\Phi, \R\big)$. Moreover, if $H_1$ and $H_2$ are (weak) Hopf algebras in $\big(\Vec_G^\Phi, \R\big)$, then so is $H_1 \widehat{\otimes} H_2$.

\subsection{Gauge transformations}\label{section2.6}
When considering algebras in symmetric linear Gr-categories, we are entitled to apply the po\-wer\-ful idea of gauge transformations invented for general quasi-Hopf algebras by Drinfeld~\cite{d}. In fact, this point of view is crucial in the pioneering works of Albuquerque and Majid~\cite{am1, am2}. For the purpose of the present paper, it suf\/f\/ices to recall gauge transformations in the following special situation.

Let $G$ be a f\/inite abelian group and $\Phi$ a 3-cocycle on $G$. It is well known that $\k [G]$ has a natural Hopf algebra structure. If we extend $\Phi$ trilinearly to $(\k [G])^{\otimes 3}$, then we have a dual quasi-Hopf algebra $(\k [G], \Phi)$ where $\Phi$ is an associator, see \cite{am1}. In this terminology, the linear Gr-category $\Vec_G^\Phi$ is exactly the comodule category of $(\k [G], \Phi)$. By def\/inition, a gauge transformation, or a twisting, on the dual quasi-Hopf algebra $(\k G, \Phi)$ is a convolution-invertible linear map $F \colon \k [G] \otimes \k [G] \to \k$ satisfying $F(g, e) = F(e, g)=1$ for all $g \in G$. In other words, $F$ is a bilinear expansion of a normalized 2-cochain on $G$. Given a gauge transformation $F$ on $Q=(\k [G], \Phi)$, one can def\/ine a new dual quasi-Hopf algebra $Q_F=(\k [G], \Phi \partial F)$, see \cite{am1} for more details. The corresponding linear Gr-category of $(\k [G], \Phi_F)$ is $\Vec_G^{\Phi \partial F}$, which is tensor equivalent to $\Vec_G^\Phi$. The tensor equivalence is given by $(\mathcal{F}, \varphi_0, \varphi_2)\colon \Vec_{G}^\Phi  \To \Vec_{G}^{\Phi \partial F}$ with
\begin{gather*}
\mathcal{F}(U)=U, \qquad \varphi_0=\Id_\k, \qquad \varphi_2\colon \ U \otimes V \To U \otimes V, \qquad u \otimes v \mapsto F(x,y) u \otimes v
\end{gather*}
for all $U, V$ and $u \in U_x$, $v \in V_y$. The tensor equivalence $\mathcal{F}$ also induces a one-to-one correspondence between the set of braidings of $\Vec_{G}^\Phi$ and that of $\Vec_{G}^{\Phi \partial F}$. Given a braiding $\mathcal{R}$ of $\Vec_{G}^\Phi$, def\/ine $\mathcal{F}(\mathcal{R})$ by \begin{gather*}
\mathcal{F}(\mathcal{R})(x,y)=\frac{F(x,y)}{F(y,x)}\mathcal{R}(x,y), \qquad \forall\, x, y \in G.
\end{gather*}
Then $\mathcal{F}(\mathcal{R})$ is a braiding of $\Vec_{G}^{\Phi \partial F}$ and
\begin{gather*}
 (\mathcal{F}, \varphi_0, \varphi_2)\colon \ \big(\Vec_{G}^\Phi, \mathcal{R}\big)  \To \big(\Vec_{G}^{\Phi \partial F}, \mathcal{F}(\mathcal{R})\big) \end{gather*}
 is a braided tensor equivalence. The reader is referred to \cite{am1, ck1, majid2} for unexplained concepts and notations.

Clearly, if $A$ is an algebra in $\Vec_G^\Phi$, then $A_F:=\mathcal{F}(A)$ is an algebra in $\Vec_G^{\Phi \partial F}$. Note that the multiplication of $A_F$ is given by
 \begin{gather*}
  x \cdot_{_F} y = F(|x|, |y|)x \cdot y
 \end{gather*}
 for all homogeneous $x, y \in A$. Similarly, if $C$ is a coalgebra in
$\Vec_G^\Phi$, then $C_F:=\mathcal{F}(C)$ is a~coalgebra in $\Vec_G^{\Phi \partial F}$ with twisted comultiplication
 \begin{gather*}
 \Delta_F (u)=F(|u_1|,|u_2|)^{-1}u_1\otimes_{F} u_2
 \end{gather*}
 for all homogeneous $u \in C$. Moreover, if $H$ is a (weak) Hopf algebra in $\Vec_G^\Phi$, then $H_F:=\mathcal{F}(H)$ is a~(weak) Hopf algebra in $\Vec_G^{\Phi \partial F}$, etc. The merit of these gauge transformations lies in that, studying an algebra $A$ in $\Vec_G^\Phi$ amounts to studying the algebra $A_F$ in $\Vec_G^{\Phi \partial F}$. By choosing suitable twisting $F$, there might be a chance to gauge transform an algebra $A$ in $\Vec_G^\Phi$ to a~simpler algebra~$A_F$ in~$\Vec_G^{\Phi \partial F}$.

\section{GCAs as algebras in symmetric Gr-categories}\label{section3}
In this section, we will show how to regard generalized Clif\/ford algebras as algebras in some suitable symmetric linear Gr-categories and apply this point of view to derive, in an easy manner, the well known theorem of decomposition for GCAs via gauge transformations.

\subsection{Generalized Clif\/ford algebras}\label{section3.1}
Various notions of GCAs appear in the literature, see for example \cite{j,knus, long,milnor, mo1, mo2, t}. In the present paper, we adopt the following concept which seems to be of the most general form. Assume $q_1, q_2, \dots, q_m \in \k^*$. By def\/inition, the generalized Clif\/ford algebra $C^{(n)}(q_1,q_2,\dots, q_m)$ is the associative $\k$-algebra with unit $1$ generated by $e_1, e_2, \dots, e_m$ subject to the relations
\begin{gather*}
 e_i^n=q_i 1, \qquad \forall\, i \qquad \mathrm{and} \qquad e_ie_j=\omega e_je_i, \qquad \forall\, i>j.
 \end{gather*}

\subsection{Formulas of 2-cocycles}\label{section3.2}
By $\Z_n^m$ we denote the group $\underbrace{\Z_n\times \cdots \times \Z_n}_m$ and by~$[a]$ the integer part of a~rational number~$a$. Let $0 \le x' \le n-1$ denote the residue of the integer~$x$ divided by~$n$. Elements $g \in \Z_n^m$ are written in the form $(g_1,g_2,\dots,g_m)$ and the product of $\Z_n^m$ is written as~$+$.

Take a primitive $n$-th root of unity $\omega$ and f\/ix $q_1,\dots, q_m\in \k^*$. Def\/ine $F \colon \Z_n^m\times \Z_n^m\rightarrow \k^*$ by
\begin{gather} \label{eq3.1}
F_{\rm GCA}(g,h)=\omega^{\sum\limits_{1\leq j<i\leq m}g_ih_j}\prod_{i=1}^m q_i^{[\frac{g_i+h_i}{n}]}
\end{gather}
for all $g,h\in \Z_n^m$. It is easy to see that $F(g,0)=F(0,g)=1$, and that
\begin{gather*}
\partial F_{\rm GCA}(f,g,h)= \frac{F_{\rm GCA}(f,g)F_{\rm GCA}(f+g,h)}{F_{\rm GCA}(g,h)F_{\rm GCA}(f,g+h)}\\
\hphantom{\partial F_{\rm GCA}(f,g,h)}{}
=\prod_{i=1}^mq_i^{[\frac{f_i+g_i}{n}]+[\frac{(f_i+g_i)'+h_i}{n}]-[\frac{g_i+h_i}{n}]-[\frac{f_i+(g_i+h_i)'}{n}]}=1.
\end{gather*}
Hence $F_{\rm GCA}$ is a 2-cocycle on $\Z_n^m$. Def\/ine
\begin{eqnarray}
 \mathcal{R}_{\rm GCA}(g,h)=\frac{F_{\rm GCA}(g,h)}{F_{\rm GCA}(h,g)}=\omega ^{\sum\limits_{1\leq j<i\leq m}(g_ih_j-g_jh_i)}
\end{eqnarray}
and clearly it is a symmetric braiding in $\Vec_{\Z_n^m}$. By Section~\ref{section2.3},
the twisted group algebra $\k_{F_{\rm GCA}}[\Z_n^m]$ is a commutative algebra in the symmetric Gr-category $(\Vec_{\Z_n^m},\R_{\rm GCA})$.
For the convenience of exposition, we write $u_g$ for the element of $\k[\Z_n^m]$ corresponding to~$g\in \Z_n^m$. Then the product on $\k_{F_{\rm GCA}}[\Z_n^m]$
is given by
\begin{gather*}
u_g\star u_h=F_{\rm GCA}(g,h)u_{g+h},
\end{gather*}
where $g,h\in\Z_n^m$ and $g+h=((g_1+h_1)',(g_2+h_2)',\dots,(g_m+h_m)')\in \Z_n^m$.

\subsection{GCAs as twisted group algebras}\label{section3.3}
Keep all the notations of the previous two subsections. Now we have an explicit realization of $C^{(n)}(q_1, q_2,\dots , q_m)$ as a twisted group algebra.

\begin{Proposition} \label{prop3.1}
 $\k_{F_{\rm GCA}}[\Z_n^m]$ and $C^{(n)}(q_1, q_2,\dots, q_m)$ are isomorphic as associative $\k$-algebras. Consequently, the latter is an algebra in the symmetric Gr-category $(\Vec_{\Z^m_n},\R_{\rm GCA})$.
\end{Proposition}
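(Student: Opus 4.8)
The plan is to construct an explicit algebra isomorphism
$\varphi \colon C^{(n)}(q_1, q_2,\dots, q_m) \To \k_{F_{\rm GCA}}[\Z_n^m]$
by sending the generators $e_i$ to the group-like basis elements corresponding to the standard generators of $\Z_n^m$. Concretely, let $\epsilon_i = (0,\dots,0,\bar 1,0,\dots,0) \in \Z_n^m$ with the $\bar 1$ in the $i$-th slot, and set $\varphi(e_i) = u_{\epsilon_i}$. Since $C^{(n)}(q_1,\dots,q_m)$ is presented by generators and relations, it suffices to verify that the elements $u_{\epsilon_1},\dots,u_{\epsilon_m}$ of $\k_{F_{\rm GCA}}[\Z_n^m]$ satisfy the defining relations $e_i^n = q_i 1$ and $e_i e_j = \omega e_j e_i$ for $i>j$; this guarantees that $\varphi$ extends to a well-defined algebra homomorphism.

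First I would check the two families of relations directly from the twisted product $u_g \star u_h = F_{\rm GCA}(g,h) u_{g+h}$. For the power relation, I would compute $u_{\epsilon_i}^{\star n}$ by induction, tracking the accumulated cocycle factors: at each multiplication the off-diagonal exponent $\sum_{j<i} g_i h_j$ vanishes because all factors lie in the same $i$-th coordinate, so only the $\prod q_i^{[(g_i+h_i)/n]}$ part contributes, and the first time the $i$-th coordinate wraps around from $\overline{n-1}$ back to $\bar 0$ it produces exactly one factor of $q_i$ while the group element returns to the identity, giving $u_{\epsilon_i}^{\star n} = q_i u_0 = q_i 1$. For the commutation relation with $i>j$, I would compute both $u_{\epsilon_i}\star u_{\epsilon_j}$ and $u_{\epsilon_j}\star u_{\epsilon_i}$; since no coordinate reaches $n$, the $q$-factors are trivial, and the ratio of the two products is precisely $\R_{\rm GCA}(\epsilon_i,\epsilon_j) = \omega^{\sum_{k<\ell}((\epsilon_i)_\ell(\epsilon_j)_k - (\epsilon_i)_k(\epsilon_j)_\ell)} = \omega$, yielding $u_{\epsilon_i}\star u_{\epsilon_j} = \omega\, u_{\epsilon_j}\star u_{\epsilon_i}$ as required.

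Having established that $\varphi$ is a well-defined homomorphism, I would prove it is an isomorphism by a dimension and surjectivity argument. The monomials $u_{\epsilon_1}^{\star g_1}\star\cdots\star u_{\epsilon_m}^{\star g_m}$ with $0 \le g_i \le n-1$ are, up to nonzero scalars coming from the cocycle, exactly the basis elements $u_g$ of $\k_{F_{\rm GCA}}[\Z_n^m]$, so $\varphi$ is surjective. On the other hand $C^{(n)}(q_1,\dots,q_m)$ is spanned by the ordered monomials $e_1^{g_1}\cdots e_m^{g_m}$ with $0\le g_i\le n-1$ (the relations let one reorder factors and reduce powers modulo $n$), so $\dim C^{(n)}(q_1,\dots,q_m) \le n^m = \dim \k_{F_{\rm GCA}}[\Z_n^m]$. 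A surjection between finite-dimensional spaces with the source of dimension at most that of the target forces equality of dimensions and bijectivity, so $\varphi$ is an isomorphism. The final sentence of the proposition is then immediate from Proposition's setup together with the already-established fact in Section~\ref{section3.2} that $\k_{F_{\rm GCA}}[\Z_n^m]$ is an algebra in $(\Vec_{\Z^m_n},\R_{\rm GCA})$.

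The main obstacle I anticipate is the bookkeeping in the power-relation computation: correctly accounting for the $\prod q_i^{[(g_i+h_i)/n]}$ factors across $n$ successive multiplications and confirming that exactly one factor of $q_i$ appears (from the single wrap-around) rather than several. The commutation relation and the dimension count are routine; the only subtle point there is justifying the upper bound $\dim C^{(n)} \le n^m$, which rests on showing the ordered monomials span, and this follows cleanly from the two defining relations. I would therefore devote most care to the inductive power computation and treat the rest briskly.
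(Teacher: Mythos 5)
Your proof is correct, but it runs in the opposite direction from the paper's. The paper defines $\psi\colon \k_{F_{\rm GCA}}[\Z_n^m]\To C^{(n)}(q_1,\dots,q_m)$, $u_g\mapsto e_1^{g_1}\cdots e_m^{g_m}$, asserts that the ordered monomials form a basis of $C^{(n)}(q_1,\dots,q_m)$ (so that $\psi$ is a linear isomorphism), and then verifies multiplicativity in a single computation with the explicit formula for $F_{\rm GCA}$. You instead map out of the presented algebra, $\varphi\colon C^{(n)}(q_1,\dots,q_m)\To \k_{F_{\rm GCA}}[\Z_n^m]$, $e_i\mapsto u_{\epsilon_i}$, verify the defining relations inside the twisted group algebra (your two computations are right: across the $n-1$ successive products exactly one wrap-around occurs, contributing the single factor $q_i$ to $u_{\epsilon_i}^{\star n}$, and $F_{\rm GCA}(\epsilon_i,\epsilon_j)=\omega$, $F_{\rm GCA}(\epsilon_j,\epsilon_i)=1$ for $i>j$), and then conclude via surjectivity together with the spanning bound $\dim C^{(n)}(q_1,\dots,q_m)\le n^m$. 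The trade-off is worth noting: the paper's route is shorter but leans on the claim, dismissed as ``easy to see,'' that the ordered monomials are linearly independent in the abstractly presented algebra --- a fact whose cleanest justification is precisely an argument like yours. Your route needs only the genuinely easy spanning half of that claim, and it delivers linear independence (hence the monomial basis of the GCA) as a byproduct of the surjection-plus-dimension count; this is the standard, fully self-contained way to handle algebras given by generators and relations. The final sentence of the proposition is handled identically in both proofs, by invoking the fact from Section~\ref{section3.2} that $\k_{F_{\rm GCA}}[\Z_n^m]$ is an algebra in $(\Vec_{\Z_n^m},\R_{\rm GCA})$.
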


\begin{proof}
It is easy to see that $C^{(n)}(q_1, q_2,\dots, q_m)$ has a basis
$\big\{e_1^{g_1}e_2^{g_2}\cdots e_m^{g_m}\,|\, g_i\in\Z_n,\,i=1,2,\dots,m\big\}$.
Def\/ine a map
\begin{align*}
\psi\colon \ \k_{F_{\rm GCA}}[\Z_n^m]&\To  C^{(n)}(q_1,q_2,\dots ,q_m),&\\
     u_g         &\to   e_1^{g_1}\cdots e_m^{g_m}.&
\end{align*}
Clearly $\psi$ is a linear isomorphism, so it remains to verify that $\psi$ is also multiplicative.
By direct computation, we have
\begin{gather*}
\psi(u_g\star u_h)=\psi(F_{\rm GCA}(g,h)u_{g+h})=F_{\rm GCA}(g,h)\psi(u_{g+h})\\
\hphantom{\psi(u_g\star u_h)}{}
=\omega^{\sum\limits_{1\leq i<j\leq m}g_jh_i}\prod_{i=1}^mq_i^{[\frac{g_i+h_i}{n}]}e_1^{(g_1+h_1)'} \cdots e_m^{(g_m+h_m)'}\\
\hphantom{\psi(u_g\star u_h)}{}
= \big(e_1^{g_1}\cdots e_m^{g_m}\big)\big(e_1^{h_1}\cdots e_m^{h_m}\big) =\psi(u_g)\psi(u_h).
\end{gather*}
Hence we have proved that $\psi$ is an isomorphism of algebras.
\end{proof}

\subsection{Decomposition theorem of GCAs}\label{section3.4}
In this subsection, we apply Proposition~\ref{prop3.1} to derive the well known decomposition theorem of generalized Clif\/ford algebras via gauge transformations.

\begin{Proposition}
We have the following isomorphisms of algebras in $(\Vec_{\Z^m_n},\R_{\rm GCA})$:
\begin{gather*}
C^{(n)}(q_1,\dots ,q_{m}) \cong  C^{(n)}(q_1,\dots ,q_{l})  \widehat{\otimes} C^{(n)}(q_{l+1},\dots ,q_{m})
 \cong C^{(n)}(q_1) \widehat{\otimes}C^{(n)}(q_2)\cdots\widehat{\otimes}C^{(n)}(q_m).
\end{gather*}
\end{Proposition}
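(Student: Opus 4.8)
The plan is to establish the first isomorphism and then obtain the second by iteration. By Proposition~\ref{prop3.1} every algebra in the statement is a twisted group algebra, so I would work entirely inside $(\Vec_{\Z_n^m},\R_{\rm GCA})$ using the explicit cocycle $F_{\rm GCA}$ of~\eqref{eq3.1}. Decomposing $\Z_n^m=\Z_n^l\times\Z_n^{m-l}$, I write each $g\in\Z_n^m$ as $g=(g',g'')$ with $g'=(g_1,\dots,g_l)$ and $g''=(g_{l+1},\dots,g_m)$, and I let $F'$, $F''$ be the cocycles of $C^{(n)}(q_1,\dots,q_l)$ and $C^{(n)}(q_{l+1},\dots,q_m)$ obtained by restricting formula~\eqref{eq3.1} to the two blocks. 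These factors are viewed inside $\Z_n^m$ through the embeddings $g'\mapsto(g',0)$ and $g''\mapsto(0,g'')$.

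The heart of the argument is the factorization identity
\[
F_{\rm GCA}(g,h)=F'(g',h')\,F''(g'',h'')\,\R_{\rm GCA}((0,g''),(h',0)).
\]
To prove it, I would split the exponent $\sum_{1\le j<i\le m}g_ih_j$ of $\omega$ in~\eqref{eq3.1} into three parts, according to whether both indices lie in $\{1,\dots,l\}$, both lie in $\{l+1,\dots,m\}$, or $j\le l<i$; the remaining range $i\le l<j$ is empty, since it would force $i<j$. Together with the product $\prod_{i=1}^m q_i^{[\cdot]}$ split as $\prod_{i\le l}\cdot\prod_{i>l}$, the first two parts reassemble into $F'(g',h')F''(g'',h'')$, while the cross part $\omega^{\sum_{j\le l<i}g_ih_j}$ is exactly $\R_{\rm GCA}((0,g''),(h',0))$: in the defining sum for $\R_{\rm GCA}$ only the terms $x_\alpha y_\beta$ with $\alpha>l\ge\beta$ survive when $x=(0,g'')$ and $y=(h',0)$.

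Granting this identity, I would define $\psi\colon\k_{F_{\rm GCA}}[\Z_n^m]\To C^{(n)}(q_1,\dots,q_l)\widehat{\otimes}C^{(n)}(q_{l+1},\dots,q_m)$ by $u_g\mapsto u_{g'}\widehat{\otimes}u_{g''}$. This is manifestly a $\Z_n^m$-graded linear isomorphism, so, exactly as in the proof of Proposition~\ref{prop3.1}, it remains only to check multiplicativity. Expanding $(u_{g'}\widehat{\otimes}u_{g''})(u_{h'}\widehat{\otimes}u_{h''})$ with the braided tensor product formula of Section~\ref{section2.4}---in which all $\Phi$-factors equal $1$ because we are in $\Vec_{\Z_n^m}$---yields $\R_{\rm GCA}((0,g''),(h',0))\,F'(g',h')F''(g'',h'')\,u_{g'+h'}\widehat{\otimes}u_{g''+h''}$, which by the factorization identity equals $F_{\rm GCA}(g,h)\psi(u_{g+h})=\psi(u_g\star u_h)$. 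Hence $\psi$ is an isomorphism of algebras in $(\Vec_{\Z_n^m},\R_{\rm GCA})$.

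Finally, the second isomorphism follows by induction on $m$: taking $l=1$ and applying the first isomorphism repeatedly peels off one factor $C^{(n)}(q_i)=\k_{F_i}[\Z_n]$ at a time, the associativity of $\widehat{\otimes}$ recorded in Section~\ref{section2.4} allowing me to omit parentheses. I expect the only genuine obstacle to be the bookkeeping in the factorization identity---checking that the single surviving cross term of $\R_{\rm GCA}((0,g''),(h',0))$ matches the cross sum $\sum_{j\le l<i}g_ih_j$ split off from $F_{\rm GCA}$; the rest is routine. This matching is precisely the point at which the braided, rather than ordinary, tensor product is forced: the braiding absorbs exactly the interaction between the two blocks of generators, which is the gauge-theoretic content of viewing GCAs inside $(\Vec_{\Z_n^m},\R_{\rm GCA})$.
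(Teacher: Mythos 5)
Your proof is correct, but it takes a genuinely different route from the paper's. The paper's own proof never computes with indices: it twists by the gauge transformation $F_{\rm GCA}^{-1}$, which induces a braided tensor equivalence $(\Vec_{\Z_n^m},\R_{\rm GCA})\To(\Vec_{\Z_n^m},o)$ carrying $C^{(n)}(q_1,\dots,q_m)\cong\k_{F_{\rm GCA}}[\Z_n^m]$ to the ordinary group algebra $\k[\Z_n^m]$; in the trivially braided category the decomposition $\k[\Z_n^m]\cong\k[\Z_n^l]\otimes\k[\Z_n^{m-l}]\cong\k[\Z_n]\otimes\cdots\otimes\k[\Z_n]$ is the obvious one with ordinary tensor products, and applying the inverse equivalence $\mathcal{F}$ transports it back. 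Your argument instead stays entirely inside the twisted category and rests on the cocycle factorization
\begin{gather*}
F_{\rm GCA}(g,h)=F'(g',h')\,F''(g'',h'')\,\R_{\rm GCA}((0,g''),(h',0)),
\end{gather*}
whose verification is sound: the range $i\le l<j$ is indeed empty in the exponent of $\omega$, only the cross terms $g_ih_j$ with $j\le l<i$ survive in $\R_{\rm GCA}((0,g''),(h',0))$, and the $q_i$-factors split block by block. Since $\partial F_{\rm GCA}=1$, all associator factors in the braided tensor product formula of Section~\ref{section2.4} equal $1$, so $u_g\mapsto u_{g'}\widehat{\otimes}u_{g''}$ is multiplicative precisely by that identity. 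As for what each approach buys: the paper's proof is shorter and conceptual --- it is the very ``gauge'' philosophy the paper advertises --- but it silently relies on the fact that a braided monoidal equivalence carries braided tensor product algebras to braided tensor product algebras (the content of $\varphi_2$); your proof makes that hidden computation explicit and produces the isomorphism in closed form, at the cost of index bookkeeping and of a little extra care in the iteration step (the induction must be run on coordinate blocks of the fixed grading group $\Z_n^m$, together with the routine fact that $\Id\widehat{\otimes}(-)$ preserves isomorphisms of algebras). One quibble: your closing remark that the matching of cross terms ``is the gauge-theoretic content'' is misleading --- your proof is exactly the one that avoids gauge transformations, and that avoidance is what distinguishes it from the paper's.
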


\begin{proof}
Recall that $F_{\rm GCA}$ is the function def\/ined in~\eqref{eq3.1} and
\begin{gather*}
\k_{F_{\rm GCA}}[\Z_n^m] \cong C^{(n)}(q_1, q_2,\dots, q_m)
\end{gather*} in $(\Vec_{\Z^m_n},\R_{\rm GCA})$.
Let $F_{\rm GCA}^{-1}$ be the inverse of $F_{\rm GCA}$, namely, $F_{\rm GCA}^{-1}(g,h)=\frac{1}{F_{\rm GCA}(g,h)}$, $\forall\, g, h \in \Z_n^m$. Then $F_{\rm GCA}^{-1}$ is a twisting on the dual quasi-Hopf algebra $(\k_{F_{\rm GCA}}[\Z_n^m], \partial F_{\rm GCA})$ and it induces a braided tensor equivalence: \begin{gather*}
 \big(\mathcal{F}^{-1}, \varphi_0, \varphi_2\big)\colon \  (\Vec_{\Z_n^m}, \mathcal{R} )  \To  (\Vec_{\Z_n^m}, o ),
 \end{gather*}
 where $o(g,h) \equiv 1$. Let $\mathcal{F}$ denote the inverse functor of $\mathcal{F}^{-1}$. By Section~\ref{section2.6}, considering  $\k_{F_{\rm GCA}}[\Z_n^m]$ in $(\Vec_{\Z_n^m}, \mathcal{R})$ amounts to considering $\k[\Z_n^m]$ in $(\Vec_{\Z_n^m}, o)$. Note that $\k[\Z_n^l]$ with $l < m$ can be viewed as an algebra in
$(\Vec_{\Z_n^m}, o)$ by a natural manner,  for example one may assign the element $(g_1, \dots, g_l) \in \Z_n^l$ the grading
$(g_1, \dots, g_l, 0, \dots, 0) \in \Z_n^m$. Thus, by assigning appropriate gradings in this way, one has immediately in
$\left(\Vec_{\Z_n^m}, o\right)$ the following isomorphisms of algebras
\begin{gather} \label{eq3.3}
\k[\Z_n^m] \cong \k\big[\Z_n^l\big] \otimes \k\big[\Z_n^{m-l}\big] \cong \k[\Z_n] \otimes \cdots \otimes \k[\Z_n].
 \end{gather} Note that the braiding $o$ is trivial, hence the braided tensor product in $(\Vec_{\Z_n^m}, o)$ is the usual tensor product and so we omit the ``hats'' in the preceding equation. Now by applying  $\mathcal{F}$ to~\eqref{eq3.3}, the claimed isomorphisms follow.
\end{proof}

\section{Weak Hopf structures of GCAs in symmetric Gr-categories}\label{section4}
In this section, we will show that GCAs have various weak Hopf structures if they are putting in suitable symmetric linear Gr-categories. With a help of the handy gauge transformations, we generalize and simplify the results obtained by Bulacu \cite{b, b1}. More precisely, according to Sections~\ref{section2.6} and \ref{section3.4} the study of $C^n(q_1, \dots, q_m)$ in $(\Vec_{\Z_n^m}, \R_{\rm GCA})$ amounts to that of $\k[\Z_n^m]$ in $(\Vec_{\Z_n^m}, o)$. Apparently, the latter situation seems much simpler. Therefore, we study the weak Hopf structures of $\k[\Z_n^m]$ in $(\Vec_{\Z_n^m}, o)$ f\/irst and then translate to those of $C^n(q_1, \dots, q_m)$ in $(\Vec_{\Z_n^m}, \R_{\rm GCA})$ via gauge transformations.

\subsection[Weak Hopf structures of \protect{$\k[\Z_n]$} in $(\Vec_{\Z_n}, o)$]{Weak Hopf structures of $\boldsymbol{\k[\Z_n]}$ in $\boldsymbol{(\Vec_{\Z_n}, o)}$}\label{section4.1}

For a better exposition of our idea, we start with the simplest case in which $m=1$. In order to f\/ind categorical weak Hopf structures on $\k[\Z_n]$, it suf\/f\/ices to determine its compatible coalgebra structures in $(\Vec_{\Z_n}, o)$. This can be done by direct computation in accordance with the axiom of weak Hopf algebras as follows.

Recall that the elements of $\Z_n$ are denoted by $\bar{i}$, i.e., integers modulo~$n$, and the multiplication is written additively. According to Def\/inition~\ref{definition2.1}(2), the comultiplication must be of the following form
\begin{gather*}
\Delta(\bar{i})=\sum_{\bar{j}} \delta(\bar{j},\overline{i-j}) \bar{j} \otimes \overline{i-j}.
\end{gather*}
The coassociativity of $\Delta$ implies that
\begin{gather*}
\delta(\bar{i}, \overline{j+k})\delta(\bar{j},\bar{k})=\delta(\bar{i},\bar{j})\delta(\overline{i+j}, \bar{k}), \qquad \forall\, \bar{i},\bar{j},\bar{k}.
\end{gather*}
In other words, $\delta$ is a 2-cocycle on $\Z_n$. In addition, the counitary condition says
\begin{gather}\label{eq4.2}
\varepsilon(\bar{0})\delta(\bar{i},\bar{0})=1=\varepsilon(\bar{0})\delta(\bar{0},\bar{i}), \qquad \forall\, \bar{i} \qquad \mathrm{and} \qquad \varepsilon(\bar{j})=0, \qquad \forall\, \bar{j} \ne \bar{0}.
\end{gather}
The multiplicity of $\Delta$ impose on $\delta$ the following
\begin{gather}\label{eq4.3}
\delta(\bar{k}, \overline{i+j-k}) = \sum_{\bar{l}} \delta(\bar{l}, \overline{i-l})\delta(\overline{k-l}, \overline{j+l-k}), \qquad \forall\, \bar{i}, \bar{j}, \bar{k}.
\end{gather}
By~\eqref{def-eq2}, the 2-cocycle $\delta$ satisf\/ies a further condition
\begin{gather}\label{eq4.4}
\delta(\bar{i},\overline{j-i})=\delta(\bar{i},\overline{-i}), \qquad \forall\, \bar{i}, \bar{j}.
\end{gather}
Now it follows immediately from \eqref{eq4.2} and \eqref{eq4.4} that
\begin{gather*}
\delta(\bar{i},\bar{j}) \equiv c, \qquad \forall\, \bar{i},\bar{j}
\end{gather*} where $c \in \k^*$ is a constant. Then by~\eqref{eq4.3}, we have $c=nc^2$, and thus $c=\frac{1}{n}$. With this, the counit is also completely determined, namely,
\begin{gather*}
\varepsilon(\bar{0})=n, \qquad \varepsilon(\bar{i})=0, \qquad \forall\, \bar{i} \ne \bar{0}.
\end{gather*}
It is clear that $\varepsilon$ satisf\/ies~\eqref{def-eq1}. Therefore, endowed with the preceding comultiplication $\Delta$ and counit $\varepsilon$, the algebra $\k[\Z_n]$ becomes a weak bialgebra in the category $(\Vec_{\Z_n}, o)$. Finally, it is not hard to observe that, if we def\/ine a $\k$-linear map $S \colon \k[\Z_n] \To \k[\Z_n]$ by $S(\bar{i})=\bar{i}$, $\forall\, \bar{i}$, then~$S$ satisf\/ies the antipode axiom of weak Hopf algebras (see Def\/inition~\ref{definition2.1}(5)) and~$\k[\Z_n]$ becomes a~weak Hopf algebra in $(\Vec_{\Z_n}, o)$.

To summarize, we have
\begin{Proposition} \label{prop4.1}
 The group algebra $\k[\Z_n]$ has a unique weak Hopf algebra structure in the Gr-category $(\Vec_{\Z_n}, o)$ with comultiplication $\Delta$, counit $\varepsilon$ and antipode $S$ given by
 \begin{gather*}\Delta(\bar{i})=\frac{1}{n}\sum_{\bar{j}} \bar{j} \otimes \overline{i-j}, \qquad \varepsilon(\bar{i})=n\delta_{\bar{i},\bar{0}}, \qquad S(\bar{i})=\bar{i}, \qquad \forall\, \bar{i} \in \Z_{n}
 \end{gather*}
 where $\delta_{\bar{i},\bar{0}}$ is the Kronecker's delta.
\end{Proposition}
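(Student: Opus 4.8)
The plan is to use the rigidity of the weak Hopf axioms to show that, once the (fixed) algebra structure of $\k[\Z_n]$ is given, there is exactly one compatible coalgebra structure and antipode in $(\Vec_{\Z_n}, o)$. The key simplification is that both the associator and the braiding $o$ of this category are trivial, so every $\Phi$- and $\R$-factor occurring in Definition~\ref{definition2.1} collapses to $1$; one is therefore solving the plain weak bialgebra and antipode equations, subject only to the constraint that all structure maps be $\Z_n$-graded morphisms.

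First I would observe that a grading-preserving comultiplication on $\k[\Z_n]$ must have the form $\Delta(\bar i)=\sum_{\bar j}\delta(\bar j,\overline{i-j})\,\bar j\otimes\overline{i-j}$ for some function $\delta$ on $\Z_n\times\Z_n$, so that the entire unknown data are $\delta$ together with the counit $\varepsilon$. I would then run this ansatz through the axioms in turn: coassociativity forces $\delta$ to be a $2$-cocycle; the counit axiom yields \eqref{eq4.2}; multiplicativity of $\Delta$ (Definition~\ref{definition2.1}(3)) yields the convolution identity \eqref{eq4.3}; and the weak-bialgebra condition \eqref{def-eq2} yields the flatness relation \eqref{eq4.4}, namely $\delta(\bar i,\overline{j-i})=\delta(\bar i,\overline{-i})$ for all $\bar i,\bar j$.

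The decisive step is to exploit \eqref{eq4.4}: for fixed $\bar i$ the quantity $\overline{j-i}$ runs over all of $\Z_n$ as $\bar j$ varies, so each ``row'' of $\delta$ is constant in its second argument, and combining this with the normalization \eqref{eq4.2} collapses $\delta$ to a single constant $c\in\k^*$ with $c=1/\varepsilon(\bar 0)$. Feeding $\delta\equiv c$ into \eqref{eq4.3} gives $c=nc^2$, hence $c=\frac{1}{n}$ and $\varepsilon(\bar 0)=n$, $\varepsilon(\bar i)=0$ for $\bar i\neq\bar 0$; a short check confirms this $\varepsilon$ satisfies \eqref{def-eq1}, so $(\k[\Z_n],\Delta,\varepsilon)$ is a weak bialgebra and, by construction, the only one. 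Finally I would set $S(\bar i)=\bar i$ and verify the three antipode conditions of Definition~\ref{definition2.1}(5); with the trivial $\Phi$ and $o$ these reduce, after substituting $\Delta(1)=\frac{1}{n}\sum_{\bar j}\bar j\otimes\overline{-j}$ and the explicit $\varepsilon$, to the elementary identities $h_1S(h_2)=\bar i=S(h_1)h_2$ and $(S(h_1)h_2)S(h_3)=\bar i$ for $h=\bar i$. The main obstacle is not conceptual but bookkeeping: correctly translating the abstract conditions \eqref{def-eq2} and the antipode axioms into the scalar relations \eqref{eq4.3}--\eqref{eq4.4}, after which uniqueness and existence both fall out of the rigidity encoded in \eqref{eq4.4}.
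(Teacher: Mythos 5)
Your proposal is correct and follows essentially the same route as the paper: the same graded ansatz $\Delta(\bar i)=\sum_{\bar j}\delta(\bar j,\overline{i-j})\,\bar j\otimes\overline{i-j}$, the same translation of the axioms into the scalar relations \eqref{eq4.2}--\eqref{eq4.4}, the same collapse of $\delta$ to a constant via \eqref{eq4.4} and \eqref{eq4.2}, the determination $c=nc^2$ hence $c=\tfrac{1}{n}$, and the verification that the identity map serves as antipode. Your spelled-out justification that $\overline{j-i}$ sweeps all of $\Z_n$ for fixed $\bar i$ is exactly the ``immediately'' step the paper leaves implicit, so there is nothing to add.
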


\subsection[Weak Hopf structures of \protect{$\k[\Z_n^m]$} in $(\Vec_{\Z_n^m}, o)$]{Weak Hopf structures of $\boldsymbol{\k[\Z_n^m]}$ in $\boldsymbol{(\Vec_{\Z_n^m}, o)}$}\label{section4.2}

Now we extend the above arguments to the general situation. Recall that the elements $g \in \Z_n^m$ are written in the form $(g_1,g_2,\dots,g_m)$ and the product of $\Z_n^m$ is written as~$+$. By a~similar argument, any comultiplication of $\k[\Z_n^m]$ in $(\Vec_{\Z_n^m}, o)$ is determined by a 2-cocycle $\delta \in \operatorname{Z}^2(\Z_n^m, \k^*)$, that is, \begin{gather*} \Delta(u_g)=\sum_{h\in\Z_{n}^m} \delta(h,g-h) u_h \otimes u_{g-h} \end{gather*} and the weak bialgebra axioms in $(\Vec_{\Z_n^m}, o)$ will impose on $\delta$ the following conditions:
\begin{gather}
\delta(f, g+h)\delta(g,h)=\delta(f,g)\delta(f+g,h), \qquad \forall\, f,g,h,\\
\varepsilon(0)\delta(g,0)=1=\varepsilon(0)\delta(0,g),\qquad \forall\, g \qquad \mathrm{and} \qquad \varepsilon(h)=0, \qquad \forall\, h \ne 0,\label{14}\\
\delta(h, f+g-h) = \sum_{l} \delta(l, f-l)\delta(h-l, g+l-h), \qquad \forall\, f,g,h,\\
\delta(g,h-g)=\delta(g,-g),\qquad \forall\, g,h.\label{16}
\end{gather}

As before, it is clear by \eqref{14}--\eqref{16} that
\begin{gather*}
\delta(g,h) \equiv \frac{1}{n^m}, \qquad \forall\, g,h, \qquad \varepsilon(g)=n^m\delta_{g,0}, \qquad \forall\, g.
\end{gather*}
Again, let $S \colon \k[\Z_n^m] \To \k[\Z_n^m]$ be the identity map. It is routine to verify that $(\k[\Z_n^m], \Delta, \varepsilon, S)$ makes a weak Hopf algebra in $(\Vec_{\Z_n^m}, o)$. This provides the main result of this section.

\begin{Theorem} \label{the4.2}
 The group algebra $\k[\Z_n^m]$ has a unique weak Hopf algebra structure in the Gr-category $(\Vec_{\Z_n^m}, o)$ with comultiplication $\Delta$, counit $\varepsilon$ and antipode $S$ given by
 \begin{gather*}\Delta(u_g)=\frac{1}{n^m}\sum_{h} u_h \otimes u_{g-h}, \qquad \varepsilon(u_g)=n^m\delta_{g,0}, \qquad S(g)=g, \qquad \forall\, g\in \Z_{n}^m. \end{gather*}
\end{Theorem}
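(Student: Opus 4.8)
The plan is to repeat, with the obvious modifications, the computation carried out for the case $m=1$ in the proof of Proposition~\ref{prop4.1}, working throughout in the category $(\Vec_{\Z_n^m}, o)$ in which both the associator and the braiding $o$ are trivial. First I would note that any coalgebra structure on $\k[\Z_n^m]$ in this graded category must preserve the grading; since the summands $u_h \otimes u_{g-h}$ exhaust the homogeneous pieces of degree $g$ in $\k[\Z_n^m] \otimes \k[\Z_n^m]$, the comultiplication is forced into the form $\Delta(u_g) = \sum_{h} \delta(h, g-h)\, u_h \otimes u_{g-h}$ for a single scalar-valued function $\delta$ on $\Z_n^m \times \Z_n^m$.

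Next I would translate each axiom of Definition~\ref{definition2.1} into a condition on $\delta$ and $\varepsilon$. Because the associator and the braiding $o$ are both trivial, every structure constant appearing in the weak Hopf axioms collapses to $1$: coassociativity of $\Delta$ yields the $2$-cocycle identity for $\delta$; the counit axiom yields~\eqref{14}; multiplicativity of $\Delta$ (Definition~\ref{definition2.1}(3)) yields the multiplicativity identity; and the simplified form of~\eqref{def-eq2} yields~\eqref{16}. This is pure bookkeeping once the trivial associator and braiding are substituted in, and it runs entirely parallel to the $m=1$ derivation of~\eqref{eq4.2}--\eqref{eq4.4}.

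The decisive step is to solve these constraints. Equation~\eqref{16} reads $\delta(g, h-g) = \delta(g, -g)$ for all $h$; since $h-g$ ranges over all of $\Z_n^m$ as $h$ does, this forces $\delta(g, \cdot)$ to be constant in its second argument. Feeding this into~\eqref{14} gives $\delta(g, 0) = \varepsilon(0)^{-1}$ for every $g$, so $\delta$ is a single constant $c = \varepsilon(0)^{-1}$. Substituting $\delta \equiv c$ into the multiplicativity identity and summing over the $n^m$ choices of $l$ yields $c = n^m c^2$, whence $c = 1/n^m$ and $\varepsilon(0) = n^m$; together with $\varepsilon(h) = 0$ for $h \ne 0$ from~\eqref{14}, this pins down both $\Delta$ and $\varepsilon$ to the stated formulas, in particular establishing uniqueness.

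Finally I would check that the identity map $S(g) = g$ satisfies the antipode axioms of Definition~\ref{definition2.1}(5). With trivial associator and braiding and $\delta \equiv 1/n^m$, this is a direct verification completely analogous to the $m=1$ case, and it upgrades the weak bialgebra just constructed to a weak Hopf algebra. I expect the only real obstacle to be the bookkeeping step: one must be certain that the displayed conditions are the complete and correct shadow of Definition~\ref{definition2.1} in $(\Vec_{\Z_n^m}, o)$. Once that is secured, the constraint-solving forces the answer and no freedom remains.
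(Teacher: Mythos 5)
Your proposal is correct and follows essentially the same route as the paper: both reduce the weak Hopf axioms in $(\Vec_{\Z_n^m}, o)$ to the constraints \eqref{14}--\eqref{16} on the coefficient function $\delta$, deduce that $\delta$ is the constant $1/n^m$ (and hence $\varepsilon(g)=n^m\delta_{g,0}$), and then verify directly that the identity map serves as the antipode. The only difference is expository: the paper states the constraint-solving tersely (``as before, it is clear by \eqref{14}--\eqref{16}\dots''), whereas you spell out the elimination order, including why $c\ne 0$ forces $c=1/n^m$ from $c=n^mc^2$.
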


\subsection{Categorical weak Hopf structures of GCAs}\label{section4.3}
With a help of the results in Section~\ref{section3}, now we can apply the handy gauge transformation to simplify, improve and generalize the results of Bulacu~\cite{b, b1}.

We start with the following
\begin{Corollary} \label{the4.3}
 The twisted group algebra $\k_F[\Z_n^m]$ has a~unique weak Hopf algebra structure in the symmetric Gr-category $(\Vec_{\Z_n^m}^{\partial F}, \R_F)$ with comultiplication~$\Delta$, counit~$\varepsilon$ and antipode~$S$ given~by
 \begin{gather*}
 \Delta(u_g)=\frac{1}{n^m}\sum_{h}F(h,g-h)^{-1} u_{h} \otimes u_{g-h}, \!\qquad \varepsilon(u_g)=n^m\delta_{g,0}, \!\qquad S(g)=g, \!\qquad \forall\, g\in \Z_{n}^m.
 \end{gather*}
\end{Corollary}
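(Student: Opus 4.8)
The plan is to obtain the asserted weak Hopf structure on $\k_F[\Z_n^m]$ by transporting, through a single gauge transformation, the \emph{unique} weak Hopf structure on $\k[\Z_n^m]$ already produced in Theorem~\ref{the4.2}. In this way the entire content of the corollary becomes a formal consequence of Section~\ref{section2.6} together with the $m=1,\dots$ computation that led to Theorem~\ref{the4.2}, with no new structural verification required.

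First I would fix the relevant gauge transformation. Taking $\Phi\equiv 1$ and $G=\Z_n^m$, the normalized $2$-cochain $F$ is a twisting on the dual quasi-Hopf algebra $(\k[\Z_n^m],1)$, so Section~\ref{section2.6} supplies a braided tensor equivalence $(\F,\varphi_0,\varphi_2)\colon (\Vec_{\Z_n^m},o)\To(\Vec_{\Z_n^m}^{\partial F},\F(o))$. Since $\F(o)(g,h)=\frac{F(g,h)}{F(h,g)}\,o(g,h)=\R_F(g,h)$, the target is exactly the symmetric Gr-category $(\Vec_{\Z_n^m}^{\partial F},\R_F)$ appearing in the statement. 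Thus I am free to apply $\F$ to the weak Hopf algebra of Theorem~\ref{the4.2}.

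Next I would read off the image $\F(\k[\Z_n^m])$ using the explicit twisting formulas recalled at the end of Section~\ref{section2.6}. The multiplication becomes $u_g\cdot_F u_h=F(g,h)u_{g+h}$, which is precisely the product of the twisted group algebra $\k_F[\Z_n^m]$; the coproduct becomes $\Delta_F(u_g)=\frac{1}{n^m}\sum_h F(h,g-h)^{-1}u_h\otimes u_{g-h}$, via $\varphi_2^{-1}(u_h\otimes u_{g-h})=F(h,g-h)^{-1}u_h\otimes u_{g-h}$; the counit is unchanged because $\varphi_0=\Id_\k$, giving $\varepsilon(u_g)=n^m\delta_{g,0}$; and the antipode, being the underlying linear map $\F(S)=S$, remains the identity $S(g)=g$. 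These coincide exactly with the structure maps in the statement, and since $\F$ carries weak Hopf algebras to weak Hopf algebras, existence is established. Uniqueness then follows because $\F$ is an equivalence, hence sets up a one-to-one correspondence between weak Hopf structures on $\k[\Z_n^m]$ in $(\Vec_{\Z_n^m},o)$ and those on $\k_F[\Z_n^m]$ in $(\Vec_{\Z_n^m}^{\partial F},\R_F)$; the uniqueness of Theorem~\ref{the4.2} transfers verbatim.

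The main point requiring care is the transport bookkeeping rather than any hard estimate: I would check that the twist of the coproduct really yields the factor $F(h,g-h)^{-1}$, and—most importantly—that the antipode axiom of Definition~\ref{definition2.1}(5), whose explicit form changes when $\Phi$ passes from $1$ to $\partial F$ and $o$ to $\R_F$, is satisfied by $S=\Id$ \emph{automatically} by functoriality of $\F$, so that no fresh verification of the $\Phi$- and $\R$-laden antipode identities is needed. Keeping the direction of the equivalence straight (applying $\F$, not $\F^{-1}$, to pass from the trivial to the twisted side) is the only other place where a sign or inverse could slip in.
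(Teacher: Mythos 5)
Your proposal is correct and follows essentially the same route as the paper: the paper's proof likewise applies the gauge transformation $(\mathcal{F},\varphi_0,\varphi_2)\colon (\Vec_{\Z_n^m},o)\To\big(\Vec_{\Z_n^m}^{\partial F},\R_F\big)$ to identify $\k_F[\Z_n^m]$ with $\mathcal{F}(\k[\Z_n^m])$, deduces the one-to-one correspondence of weak Hopf structures, and invokes Theorem~\ref{the4.2}. Your version merely spells out the bookkeeping (the twisted coproduct factor $F(h,g-h)^{-1}$, the unchanged counit and antipode) that the paper leaves implicit.
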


\begin{proof}
By applying the gauge transformation
\begin{gather*}
(\mathcal{F}, \varphi_0, \varphi_2)\colon \ (\Vec_{\Z_n^m}, o)  \To \big(\Vec_{\Z_n^m}^{\partial F}, \R_F\big),
 \end{gather*}
one has $\k_F[\Z_n^m]=\mathcal{F}(\k [\Z_n^m])$ and therefore the set of weak Hopf structures of $\k_F[\Z_n^m]$ in $\big(\Vec_{\Z_n^m}^{\partial F}, \R_F\big)$ is in one-to-one correspondence with that of $\k[\Z_n^m]$ in $(\Vec_{\Z_n^m}, o)$. Now the claim follows immediately by the above Theorem~\ref{the4.2}.
\end{proof}

With this, we may discover the categorical weak Hopf structures in suitable symmetric Gr-categories of some well known algebras by properly specifying the twisted functions. In particular, if we take $F=F_{\rm GCA}$, then we determine the categorical weak Hopf structure of generalized Clif\/ford algebras via Proposition \ref{prop3.1}.

\begin{Theorem} \label{the4.4}
 The generalized Clifford algebra $C^n(q_1, \dots, q_m)$ has a unique weak Hopf algebra structure in the symmetric Gr-category $(\Vec_{\Z_n^m}, \R_{F_{\rm GCA}})$, where
  \begin{gather*} F_{\rm GCA}(g,h)=\omega^{\sum\limits_{1\leq j<i\leq m}g_ih_j}\prod_{i=1}^m q_i^{[\frac{g_i+h_i}{n}]},
  \end{gather*}
   with comultiplication $\Delta$, counit $\varepsilon$ and antipode $S$ given by
 \begin{gather*}\Delta\big(e_1^{g_1}e_2^{g_2}\cdots e_m^{g_m}\big)=\frac{1}{n^m}\sum_{h}\omega^{-\sum\limits_{1\leq j<i\leq m}h_i(g_j-h_j)'}\\
 \hphantom{\Delta\big(e_1^{g_1}e_2^{g_2}\cdots e_m^{g_m}\big)=}{}
 \times \prod_{i=1}^m\mathbf{q}_i  e_1^{h_1}e_2^{h_2}\cdots e_m^{h_m} \otimes e_1^{(g_1-h_1)'}e_2^{(g_2-h_2)'}\cdots e_m^{(g_m-h_m)'},
\\
 \varepsilon\big(e_1^{g_1}\cdots e_m^{g_m}\big)=n^m\delta_{g,0}, \qquad \mathrm{and} \qquad S\big(e_1^{g_1}\cdots e_m^{g_m}\big)=e_1^{g_1}\cdots e_m^{g_m},
 \end{gather*}
for all $ e_1^{g_1}e_2^{g_2}\cdots e_m^{g_m}\in C^n(q_1, \dots, q_m)$ and all $g=(g_1,g_2,\dots,g_m)$, $h=(h_1,\dots,h_m)\in \Z_n^m$, where
\begin{gather*}\mathbf{q}_i=
                                                     \begin{cases}
                                                       q_i, &  h_i > g_i, \\
                                                       1, & \text{otherwise}.
                                                     \end{cases}
\end{gather*}
\end{Theorem}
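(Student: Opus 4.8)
The plan is to deduce Theorem~\ref{the4.4} from Corollary~\ref{the4.3} together with the algebra isomorphism $\psi$ of Proposition~\ref{prop3.1}, thereby reducing the whole statement to a single elementary exponent computation. First I would note that $F_{\rm GCA}$ is a $2$-cocycle on $\Z_n^m$ (as verified in Section~\ref{section3.2}, where $\partial F_{\rm GCA}=1$), so specializing $F=F_{\rm GCA}$ in Corollary~\ref{the4.3} takes place in the category $\big(\Vec_{\Z_n^m}^{\partial F_{\rm GCA}}, \R_{F_{\rm GCA}}\big)=\big(\Vec_{\Z_n^m}, \R_{F_{\rm GCA}}\big)$, which is exactly the category in the statement. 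Corollary~\ref{the4.3} then equips $\k_{F_{\rm GCA}}[\Z_n^m]$ with a unique weak Hopf structure given by $\Delta(u_g)=\frac{1}{n^m}\sum_h F_{\rm GCA}(h,g-h)^{-1}u_h\otimes u_{g-h}$, $\varepsilon(u_g)=n^m\delta_{g,0}$ and $S(u_g)=u_g$.

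Next I would transport this structure along the algebra isomorphism $\psi\colon \k_{F_{\rm GCA}}[\Z_n^m]\To C^n(q_1,\dots,q_m)$, $u_g\mapsto e_1^{g_1}\cdots e_m^{g_m}$, of Proposition~\ref{prop3.1}. Since $\psi$ is an isomorphism of algebras in $\big(\Vec_{\Z_n^m}, \R_{F_{\rm GCA}}\big)$, it induces a bijection between the weak Hopf structures on the two algebras, so existence and uniqueness for $C^n(q_1,\dots,q_m)$ are immediate, and the transported operations are $\Delta_{C^n}=(\psi\otimes\psi)\circ\Delta\circ\psi^{-1}$, $\varepsilon_{C^n}=\varepsilon\circ\psi^{-1}$ and $S_{C^n}=\psi\circ S\circ\psi^{-1}$. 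For the counit and antipode this transport is purely formal and returns $\varepsilon(e_1^{g_1}\cdots e_m^{g_m})=n^m\delta_{g,0}$ and $S(e_1^{g_1}\cdots e_m^{g_m})=e_1^{g_1}\cdots e_m^{g_m}$, matching the claim.

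The only genuine computation concerns $\Delta$, where I must evaluate $F_{\rm GCA}(h,g-h)^{-1}$ and re-express it on the basis $\{e_1^{g_1}\cdots e_m^{g_m}\}$. The $\omega$-part of $F_{\rm GCA}(h,g-h)$ is $\omega^{\sum_{1\le j<i\le m}h_i(g_j-h_j)'}$, which upon inversion yields the stated phase $\omega^{-\sum_{1\le j<i\le m}h_i(g_j-h_j)'}$. The $q$-part requires the componentwise integer-part exponent $[\frac{h_i+(g_i-h_i)'}{n}]$: under the residue convention $0\le (g_i-h_i)'\le n-1$, a short case analysis shows that if $h_i\le g_i$ then $(g_i-h_i)'=g_i-h_i$ and $h_i+(g_i-h_i)'=g_i<n$, so the exponent is $0$, whereas if $h_i>g_i$ then $(g_i-h_i)'=g_i-h_i+n$ and $h_i+(g_i-h_i)'=g_i+n$, so the exponent is $1$. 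This is precisely the dichotomy defining $\mathbf{q}_i$, which accounts for the product $\prod_{i=1}^m\mathbf{q}_i$ in the comultiplication; applying $\psi\otimes\psi$ to $u_h$ and $u_{g-h}$ then produces the displayed tensor $e_1^{h_1}\cdots e_m^{h_m}\otimes e_1^{(g_1-h_1)'}\cdots e_m^{(g_m-h_m)'}$.

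The main obstacle, such as it is, is this componentwise evaluation of $[\frac{h_i+(g_i-h_i)'}{n}]$ under the chosen residue convention; everything else is the formal transport of structure across a categorical algebra isomorphism, which already delivers both existence and uniqueness. I would pay particular attention to the sign of the $q$-exponent, since the inversion in $F_{\rm GCA}(h,g-h)^{-1}$ combined with the residue convention is exactly the point at which bookkeeping errors are most likely to slip in.
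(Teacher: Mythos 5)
Your strategy coincides with the paper's own proof, which derives Theorem~\ref{the4.4} simply as a direct consequence of Proposition~\ref{prop3.1} and Corollary~\ref{the4.3}; your transport of the weak Hopf structure along $\psi$, and your case analysis of the exponent $\left[\frac{h_i+(g_i-h_i)'}{n}\right]$, are exactly the computation the paper leaves implicit, and the existence/uniqueness transfer is fine.

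However, the final step of your $q$-bookkeeping is wrong --- it is precisely the slip you warned about in your closing paragraph. You correctly established
\begin{gather*}
F_{\rm GCA}(h,g-h)=\omega^{\sum\limits_{1\leq j<i\leq m}h_i(g_j-h_j)'}\prod_{i\,:\,h_i>g_i} q_i,
\end{gather*}
and the coefficient in Corollary~\ref{the4.3} is $F_{\rm GCA}(h,g-h)^{-1}$. Inverting flips the sign of the phase, as you note, but it must equally invert the $q$-part: the transported comultiplication carries $\prod_{i\,:\,h_i>g_i} q_i^{-1}$, not $\prod_{i\,:\,h_i>g_i} q_i$. Your sentence claiming the dichotomy ``accounts for the product $\prod_{i=1}^m\mathbf{q}_i$'' silently drops this inversion. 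The discrepancy is real, not notational: take $n=2$, $m=1$, so $C^{(2)}(q)=\k[e]/(e^2-q)$, where the braiding $\R_{F_{\rm GCA}}$ is trivial and the braided tensor product is the ordinary one. Multiplicativity of $\Delta$ forces $\Delta(e)^2=\Delta(e^2)=q\,\Delta(1)$; writing $\Delta(e)=\frac{1}{2}(1\otimes e+e\otimes 1)$ and $\Delta(1)=\frac{1}{2}(1\otimes 1+c\,e\otimes e)$, one computes $\Delta(e)^2=\frac{1}{2}(q\,1\otimes 1+e\otimes e)$, hence $c=q^{-1}$, whereas the formula you assert gives $c=q$. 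So the comultiplication is only correct with $\mathbf{q}_i=q_i^{-1}$ (when $h_i>g_i$): the theorem as printed contains a sign error in the $q$-exponent, which your own case analysis exposes --- but your proof, instead of catching it, validates it by an inconsistent inversion. The repair is one line (carry the inverse through the $q$-part and state the comultiplication with $\mathbf{q}_i^{-1}$), but as written that step does not follow from your computation.
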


\begin{proof}
Direct consequence of Proposition \ref{prop3.1} and Corollary~\ref{the4.3}.
\end{proof}

\subsection{Final remarks}\label{section4.4}

In fact, our above discussion can be easily extended to the full generality. Namely,
let $G$ be an arbitrary f\/inite abelian group written multiplicatively with unit $e$, let $\k$ be a f\/ield with \mbox{$\operatorname{char} \k\nmid |G|$}, and let $F$ be an arbitrary 2-cochain on $G$. Then the twisted group algebra $\k_F [G]$ has a~unique weak Hopf structure in the symmetric Gr-category $\big(\Vec_{G}^{\partial F}, \R_F\big)$ with comultiplication~$\Delta$, counit $\varepsilon$ and antipode~$S$ given by
 \begin{gather*}\Delta(x)=\frac{1}{|G|}\sum_{u} F\big(u,u^{-1}x\big)^{-1}u \otimes u^{-1}x, \qquad \varepsilon(x)=|G|\delta_{x,e}, \qquad S(x)=x, \qquad \forall\, x\in G. \end{gather*}
If we specify $G=\Z_2^n$ and $F$ the Albuquerque--Majid cochain $F_{\rm CD}$ of Cayley--Dickson algebras~\cite{am1}, then we determine completely the weak Hopf structures of Cayley--Dickson algebras in $\big(\Vec_{\Z_2^n}^{\partial F_{\rm CD}}, \R_{F_{\rm CD}}\big)$. This improves the main result of Bulacu~\cite{b}. If we specify $G=\Z_2^n$ and $F=F_{\mathbb{O}}$, the Morier-Genoud--Ovsienko cochain of higher octonions~\cite{mgo}, then we determine completely the weak Hopf structures of higher octonions~$\mathbb{O}_n$ in the symmetric Gr-category $\big(\Vec_{\Z_2^n}^{\partial F_\mathbb{O}}, \R_{F_\mathbb{O}}\big)$.

\subsection*{Acknowledgements}
 This research  was supported by SRFDP 20130131110001, NSFC 11471186, NSFC 11571199, and SDNSF ZR2013AM022.


\pdfbookmark[1]{References}{ref}
\LastPageEnding

\end{document}